\documentclass[11pt,a4paper,margin=1in]{amsart}
\usepackage{url}
\usepackage{amsmath, amsthm, amssymb}
\usepackage{mathrsfs}
\usepackage{fancyhdr}
\usepackage{hyperref}
\usepackage{lineno}
\usepackage{geometry} 
\usepackage{comment}

\theoremstyle{plain}

\newtheorem{theorem}{Theorem}[section]
\newtheorem{lemma}[theorem]{Lemma}
\newtheorem{proposition}[theorem]{Proposition}

\theoremstyle{definition}

\newtheorem{definition}[theorem]{Definition}
\newtheorem{example}[theorem]{Example}

\theoremstyle{remark}

\newtheorem{remark}[theorem]{Remark}
\newtheorem{question}[theorem]{Question}

\begin{document}
\title[Co-Cyclic and Sub-Cyclic Mappings]{Common Fixed Points of Co-Cyclic and Sub-Cyclic Mappings in Complete Metric Spaces}
\author{$^1$Dev Raj Joshi,  $^{2, \ast}$Sanjay Mishra,  $^3$Piyush Kumar Tripathi}
\maketitle
\begin{center}
{
\footnotesize $^1$Department of Mathematics, Far-Westen University, Tikapur Multiple Campus Tikapur, Kailali, Nepal.\\ $^{2, 3}$ Department of Mathematics, Amity School of Applied Sciences, Amity University Lucknow Campus, Uttar Pradesh, Lucknow, India \\

$^{\ast}$Corresponding Author: drsmishraresearch@gmail.com}
\end{center}

\begin{abstract}
In this paper, we introduce two new classes of cyclic-type mappings, namely,
co-cyclic mappings and sub-cyclic mappings, defined on the union of two
subsets of a metric space. Under suitable Jungck-type contractive conditions
and compatibility assumptions, we establish unique common fixed point
theorems for these mappings in complete metric spaces. The proofs are based
on the construction of appropriate sequences, the associated contractive
conditions, and the completeness of the underlying metric space. Examples
are provided to illustrate the proposed concepts and to demonstrate the
applicability of the main results. Finally, some open questions and possible
directions for further research concerning co-cyclic and sub-cyclic mappings
are presented.
\end{abstract}

\noindent\emph{2010 Mathematics Subject Classification:} Primary 47H10; Secondary 54H25.

\noindent\emph{Key words:} Co-cyclic mappings; Sub-cyclic mappings; Common fixed points; Jungck-type contraction; Compatible mappings; Equivalent sequences.

\section{Introduction and Preliminaries}
\label{sec:Introduction and Preliminaries}

Fixed point theory is an important area of nonlinear analysis with
applications in several branches of mathematics and related disciplines.
Among the various directions in fixed point theory, the study of cyclic
mappings has received considerable attention due to its usefulness in
extending classical fixed point results to mappings defined on unions of
subsets of a metric space. In particular, cyclic conditions provide a
natural framework for studying fixed points and best proximity points when
the domain is represented as the union of two or more subsets. In this
section, we recall some fundamental concepts and known results concerning
cyclic mappings, cyclic representations, semi-cyclic mappings, and
contractive conditions. These preliminary notions provide the necessary
background and motivation for the introduction of the new classes of
co-cyclic and sub-cyclic mappings considered in this paper.

Throughout this paper, $M$ denotes a non-empty set, and $\mu$ denotes a
metric on $M$. The concept of a cyclic mapping was first introduced by
Kirk et al.~\cite{W.12} as follows.

\begin{definition}\label{def:cyclic-mapping}
Let $V$ and $W$ be two non-empty subsets of a metric space $(M,\mu)$. 
A mapping $f: V\cup W \to V\cup W$ is called cyclic if $f(V)\subseteq W$ and $f(W)\subseteq V$. 
\end{definition}

A natural question arises: Do there exist two or more self-mappings satisfying the conditions
\[
f(V)\subseteq g(V)\subseteq W
\quad\text{and}\quad
f(W)\subseteq g(W)\subseteq V?
\]
If such mappings exist, how can we establish a common fixed point theorem for them? 
In this paper, we aim to answer these questions and investigate related concepts. 
We also provide examples to illustrate and verify the proposed concepts and results.

The following fixed point theorem for cyclic mappings was proved by Kirk et al.~\cite{W.12}.

\begin{theorem}\label{thm:kirk-cyclic}
Let $V$ and $W$ be two non-empty closed subsets of a complete metric space
$(M,\mu)$. Let $f:V\cup W\to V\cup W$ be a cyclic mapping such that there
exists $q\in(0,1)$ satisfying
\[
\mu(fx,fy)\leq q\mu(x,y),
\qquad \forall x\in V,\ \forall y\in W.
\]
Then $f$ has a unique fixed point in $V\cap W$.
\end{theorem}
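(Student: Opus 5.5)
We argue by Picard iteration, using the cyclic condition so that consecutive iterates always lie in opposite sets. Fix $x_0\in V$ and set $x_{n+1}=fx_n$. Since $f(V)\subseteq W$ and $f(W)\subseteq V$, we have $x_{2k}\in V$ and $x_{2k+1}\in W$ for all $k$. Hence each consecutive pair $(x_n,x_{n+1})$ has one point in $V$ and the other in $W$. The contraction is only assumed for pairs $x\in V$, $y\in W$, but it is symmetric in $x$ and $y$, so it applies to every such pair in either order. This gives
\[
\mu(x_{n+1},x_{n+2})\le q\,\mu(x_n,x_{n+1}),
\qquad\text{hence}\qquad
\mu(x_n,x_{n+1})\le q^n\mu(x_0,x_1).
\]

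The triangle inequality and the geometric series then give
\[
\mu(x_n,x_m)\le \frac{q^n}{1-q}\,\mu(x_0,x_1)\qquad (m>n).
\]
So $(x_n)$ is Cauchy and, by completeness, converges to some $z\in M$. The even subsequence lies in $V$ and the odd subsequence lies in $W$. Both converge to $z$, and $V,W$ are closed, so $z\in V\cap W$. This is the step where the cyclic structure really matters: without closedness of both sets we could not place the limit in the intersection, and it is only there that the contraction can be applied to $z$ against arbitrary iterates.

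To see that $z$ is fixed, note $z\in V$ while $x_{2k+1}\in W$. The contraction gives $\mu(fz,x_{2k+2})=\mu(fz,fx_{2k+1})\le q\,\mu(z,x_{2k+1})\to0$. Therefore $x_{2k+2}\to fz$, and uniqueness of limits gives $fz=z$.

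For uniqueness, let $w$ be another fixed point. Any fixed point lies in $V\cup W$. If $w\in V$ then $w=fw\in W$, and symmetrically if $w\in W$ then $w\in V$, so $w\in V\cap W$. Applying the contraction with $z\in V$ and $w\in W$ yields $\mu(z,w)=\mu(fz,fw)\le q\,\mu(z,w)$. Since $q<1$, this forces $z=w$.

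I expect no serious obstacle here. The only point needing care is to justify, in each step, that the two arguments fed into the contraction really lie one in $V$ and one in $W$.
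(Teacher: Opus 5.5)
Your proof is correct and follows essentially the same route as the paper. The paper does not prove this result itself (it is cited from Kirk et al.), but Remark~\ref{rk:generalization} obtains it by taking $g$ to be the identity in Theorem~\ref{thm:sub-cyclic-common-fixed-point}, whose proof matches yours step by step: the alternating iteration, the geometric estimate, closedness placing $z$ in $V\cap W$, and uniqueness via the contraction. You also derive $fz=z$ directly from the contraction applied to $z\in V$ and $x_{2k+1}\in W$, so you never need the continuity of $f$ that Theorem~\ref{thm:sub-cyclic-common-fixed-point} assumes; this matters, because continuity is not a hypothesis here.
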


Following the work of Kirk et al.~\cite{W.12}, many authors introduced and
studied different types of cyclic mappings, such as semi-cyclic mappings,
cyclic representations of sets, and generalized cyclic representations.
Several fixed point results have been established for different types of
cyclic contractions. In $2011$, De Ia Sen~\cite{M.Dla} introduced a general
contractive condition for cyclic mappings. In the same year, Petric
~\cite{M.A} proved best proximity point theorems for weak cyclic Kannan
contractions. Karapinar and Erhan~\cite{Erthan} introduced best proximity
point results for different types of cyclic contractions. In $2012$, Abbas
et al.~\cite{M.Ab.}, Aydi et al.~\cite{H.A.C}, Karapinar et al.~\cite{E.K},
Karapinar~\cite{E.}, and Karapinar and Nashine~\cite{E.H} studied different
types of cyclic contractions and established corresponding fixed point
theorems. Subsequently, many researchers obtained fixed point results for
cyclic mappings under various contractive conditions; see, for example,
\cite{A.H,G.H,H.M.D,G.N,H.M,J.M,J03,J04,J05,P.S}.

The study of common fixed points for cyclic-type mappings can be viewed
broadly through two approaches: cyclic representations and semi-cyclic
mappings. The cyclic representation of two mappings and the corresponding
common fixed point problem were initiated by Shatanawi and Postolache
~\cite{W.S} in $2013$. Subsequently, in $2015$, Abbas et al.~\cite{M.09}
introduced the following definition of a cyclic representation for two
mappings on several subsets, together with a generalized contractive condition:

\begin{definition}\cite{M.09}\label{def:generalized-cyclic-contraction}
Let $\{M_i:i=1,2,\ldots,k\}$ be a finite collection of non-empty closed
subsets of a metric space $(M,\mu)$, and let
\[
f,g:\bigcup_{i=1}^{k}M_i\to\bigcup_{i=1}^{k}M_i
\]
be two mappings. The pair $(f,g)$ is called a generalized cyclic
contraction if the following conditions are satisfied:
\begin{enumerate}
    \item $\displaystyle\bigcup_{i=1}^{k}M_i$ has an $(f,g)$-cyclic
    representation with respect to the collection
    $\{M_i:i=1,2,\ldots,k\}$;

    \item There exist two control functions $\psi$ and $\phi$ such that
    \[
    \psi\bigl(\mu(f(x),f(y))\bigr)
    \leq
    \psi\bigl(M_{g,f}(x,y)\bigr)
    -
    \phi\bigl(M_{g,f}(x,y)\bigr),
    \]
    where
    \[
    M_{g,f}(x,y)
    =
    \max\left\{
    \mu(g(x),g(y)),
    \mu(g(x),f(x)),
    \mu(g(y),f(x))
    \right\},
    \]
    for any $x\in M_i$ and $y\in M_{i+1}$, where
    $i=1,2,\ldots,k$ and $M_{k+1}=M_1$.
\end{enumerate}
\end{definition}

Yamaod et al.~\cite{O.Y} also used the cyclic representation of two
mappings for two subsets of a metric space in $2015$. Subsequently, many authors established common fixed point results by using cyclic representations of two mappings in different types of metric spaces; see, for example, \cite{A.R,J.L,J.L.M,M.B,W.G}. Another important direction in fixed point theory is the study of common fixed points for semi-cyclic mappings. H. Aydi et al.~\cite{H.A} introduced the concept of semi-cyclic mappings as follows:

\begin{definition}\label{def:semi-cyclic-mapping}
Let $V$ and $W$ be two non-empty subsets of a metric space $(M,\mu)$.
The mappings $f,g:V\cup W\to V\cup W$ are called \emph{semi-cyclic} if $f(V)\subseteq W$ and $g(W)\subseteq V$. 
\end{definition}

Many researchers have established common fixed point theorems for
semi-cyclic mappings under various types of contractive conditions; see, for example, \cite{M.L,S.W,S.W.X,S.W.Y}. Based on the above literature, we observe that the study of common fixed points for cyclic mappings in different metric spaces has mainly developed through two approaches:
cyclic representations of sets with respect to two self-mappings and
semi-cyclic mappings. In this work, we introduce co-cyclic and sub-cyclic
mappings on the union of two closed subsets of a metric space and establish
a common fixed point result for such mappings by using a Jungck-type
contractive condition~\cite{G.03}.

Gerald Jungck~\cite{G.03} introduced the following type of contraction.

\begin{definition}\label{def:g-contraction}
Let $f,g:M\to M$ be two self-mappings of a complete metric space
$(M,\mu)$ such that $f(M)\subseteq g(M)$. If there exists a constant
$q\in(0,1)$ such that
\[
\mu(f(x),f(y))\leq q\,\mu(g(x),g(y)),
\qquad \forall x,y\in M,
\]
then $f$ is called a $g$-contraction on the metric space $(M,\mu)$.
\end{definition}

Further, Gerald Jungck~\cite{G.04} introduced the concept of compatible
mappings as follows.

\begin{definition}\label{def:compatible-mappings}
Two self-mappings $f$ and $g$ of a metric space $(M,\mu)$ are called
compatible if
\[
\lim_{n\to\infty}\mu\bigl(f(g(x_n)),g(f(x_n))\bigr)=0
\]
whenever $\{x_n\}$ is a sequence in $M$ such that
\[
\lim_{n\to\infty}f(x_n)
=
\lim_{n\to\infty}g(x_n).
\]
\end{definition}
\section{Main Results}\label{sec:Main Results}

In this section, we present the main contributions of the paper. Motivated by
the existing theories of cyclic representations and semi-cyclic mappings, we
introduce two new classes of mappings, namely, co-cyclic mappings and
sub-cyclic mappings. These concepts describe different cyclic relationships
between two mappings defined on the union of two subsets of a metric space.

We then establish common fixed point theorems for these mappings under
suitable Jungck-type contractive conditions and compatibility assumptions.
The proofs are based on the construction of appropriate iterative sequences
and the completeness of the underlying metric space. The obtained results
extend the framework of cyclic mappings to a setting involving two mappings
and provide sufficient conditions for the existence and uniqueness of their
common fixed points. Illustrative examples are also provided to demonstrate
the applicability of the proposed concepts and results.

To establish the main results, we present some new concepts regarding the convergence of two equivalent sequences, along with examples in support of these concepts.

\begin{definition}\label{def:equivalent-sequences}
Two sequences $\{x_n\}$ and $\{y_n\}$ in a metric space $(M,\mu)$ are
called equivalent if
\[
\lim_{n\to\infty}\mu(x_n,y_n)=0.
\]
\end{definition}

\begin{lemma}\label{lem:equivalent-cauchy-sequences}
Let $(M,\mu)$ be a complete metric space. If $\{x_n\}$ and $\{y_n\}$ are
Cauchy sequences in $M$ such that
\[
\lim_{n\to\infty}\mu(x_n,y_n)=0,
\]
then $\{x_n\}$ and $\{y_n\}$ converge to the same point in $M$.
\end{lemma}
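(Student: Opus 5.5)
We will use completeness of $(M,\mu)$ to obtain the two limits separately, and then use the triangle inequality to show they coincide.

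\emph{Step 1 (existence of the limits).} Since $\{x_n\}$ and $\{y_n\}$ are Cauchy sequences in the complete metric space $(M,\mu)$, there exist points $x,y\in M$ with
\[
\lim_{n\to\infty}\mu(x_n,x)=0
\qquad\text{and}\qquad
\lim_{n\to\infty}\mu(y_n,y)=0.
\]

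\emph{Step 2 (the limits coincide).} For every $n$, applying the triangle inequality twice gives
\[
0\le \mu(x,y)\le \mu(x,x_n)+\mu(x_n,y_n)+\mu(y_n,y).
\]
Given $\varepsilon>0$, we choose $N$ so large that each of the three terms on the right is less than $\varepsilon/3$ for all $n\ge N$. This is possible by Step 1 and by the hypothesis $\lim_{n\to\infty}\mu(x_n,y_n)=0$. Taking any such $n$ yields $\mu(x,y)<\varepsilon$. Since $\varepsilon>0$ is arbitrary, $\mu(x,y)=0$, and hence $x=y$ because $\mu$ is a metric.

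\emph{Step 3 (conclusion).} Both sequences converge to the common point $x=y\in M$, which proves the lemma. Limits in a metric space are unique, so this common limit is well defined.

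There is no genuine obstacle here. The only point to watch is the order of the argument: completeness must be invoked first to produce the two limits, and only then does the $\varepsilon/3$ estimate compare them.

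An alternative for Step 1 is to note that completeness is needed for only one sequence. Once $x_n\to x$, the estimate $\mu(y_n,x)\le \mu(y_n,x_n)+\mu(x_n,x)\to 0$ shows directly that $y_n\to x$. I would mention this simplification, which also shows that the Cauchy hypothesis on $\{y_n\}$ is redundant, but I would keep the symmetric argument above as the main proof.
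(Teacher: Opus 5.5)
Your proof is correct. The paper states this lemma without any proof, so there is no argument of the authors' to compare against. Your argument (completeness gives the two limits, then the triangle inequality $\mu(x,y)\le\mu(x,x_n)+\mu(x_n,y_n)+\mu(y_n,y)\to 0$ forces them to coincide) is the standard one and fills that gap completely.

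Your closing remark is also right and worth keeping. If $x_n\to x$ and $\mu(x_n,y_n)\to 0$, then $\mu(y_n,x)\le\mu(y_n,x_n)+\mu(x_n,x)\to 0$. So the Cauchy hypothesis on $\{y_n\}$ is redundant, since it follows from the other assumptions. The lemma as stated is therefore weaker than what your argument actually proves.
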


\begin{definition}\label{def:non-cauchy-equivalent}
Let $\{x_n\}$ and $\{y_n\}$ be two sequences in a metric space $(M,\mu)$.
If
\[
\lim_{n\to\infty}\mu(x_n,y_n)=0
\]
and neither $\{x_n\}$ nor $\{y_n\}$ is a Cauchy sequence, then
$\{x_n\}$ and $\{y_n\}$ are called non-Cauchy equivalent sequences.
\end{definition}

\begin{example}\label{exmp:equivalent-cauchy}
Let $x_n=2-\frac{1}{n}$ and $y_n=2+\frac{1}{n}$, and let $\mu(x,y)=|x-y|$ be the usual metric on $\mathbb{R}$. Then
$\{x_n\}$ and $\{y_n\}$ are Cauchy sequences, and
\[
\lim_{n\to\infty}\mu(x_n,y_n)
=
\lim_{n\to\infty}|x_n-y_n|
=
\lim_{n\to\infty}\frac{2}{n}
=0.
\]
Thus, $\{x_n\}$ and $\{y_n\}$ are equivalent sequences and both converge
to the same limit point $2$.
\end{example}

\begin{example}\label{exmp:non-cauchy-equivalent} 
Let $x_n=n^2$ and $y_n=n^2+\frac{1}{n}$, and let $\mu(x,y)=|x-y|$ be the usual metric on $\mathbb{R}$. Both
$\{x_n\}$ and $\{y_n\}$ are not Cauchy sequences. Moreover,
\[
\lim_{n\to\infty}\mu(x_n,y_n)
=
\lim_{n\to\infty}|x_n-y_n|
=
\lim_{n\to\infty}\frac{1}{n}
=0.
\]
Therefore, $\{x_n\}$ and $\{y_n\}$ are non-Cauchy equivalent sequences.
\end{example}

The main purpose of this paper is to introduce the following two types of cyclic mappings with respect to two subsets of a metric space $(M,\mu)$.

\begin{definition}\label{def:co-cyclic-mapping}
Let $V$ and $W$ be two non-empty subsets of a metric space $(M,\mu)$.
Two self-mappings $f,g:V\cup W\to V\cup W$ are called co-cyclic if
\[
f(V)\subseteq g(V)\subseteq W
\quad\text{and}\quad
f(W)\subseteq g(W)\subseteq V.
\]
\end{definition}

\begin{definition}\label{def:sub-cyclic-mapping}
Let $V$ and $W$ be two non-empty subsets of a metric space $(M,\mu)$.
Let $f,g:V\cup W\to V\cup W$ be two self-mappings such that
\[
f(V)\subseteq g(W)\subseteq W
\quad\text{and}\quad
f(W)\subseteq g(V)\subseteq V.
\]
Then $f$ is called sub-cyclic in $g$.
\end{definition}

Now we prove common fixed point theorems on the basis of Co- cyclic and Sub-cyclic mappings and supporting example for the main result.

\begin{proposition}\label{pro:intersection-invariance}
Let $V$ and $W$ be two non-empty subsets of a metric space $(M,\mu)$.
Let $f,g:V\cup W\to V\cup W$ be co-cyclic mappings satisfying
\[
f(V)\subseteq g(V)\subseteq W
\quad\text{and}\quad
f(W)\subseteq g(W)\subseteq V.
\]
Then
\[
f(V\cap W)\subseteq V\cap W
\quad\text{and}\quad
g(V\cap W)\subseteq V\cap W.
\]
Equivalently, for every $x\in V\cap W$,
\[
f(x)\in V\cap W
\quad\text{and}\quad
g(x)\in V\cap W.
\]
\end{proposition}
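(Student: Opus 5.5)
The plan is a direct element-chasing argument that uses only the two chains of inclusions in Definition~\ref{def:co-cyclic-mapping}. Neither completeness nor any contractive condition is involved. If $V\cap W=\emptyset$, both inclusions hold vacuously. So I fix an arbitrary $x\in V\cap W$ and show that $f(x)$ and $g(x)$ lie in $V$ and in $W$.

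For $f$, I use membership of $x$ in each set separately.
\begin{itemize}
\item Since $x\in V$, the first chain gives $f(x)\in f(V)\subseteq g(V)\subseteq W$, so $f(x)\in W$.
\item Since $x\in W$, the second chain gives $f(x)\in f(W)\subseteq g(W)\subseteq V$, so $f(x)\in V$.
\end{itemize}
Hence $f(x)\in V\cap W$.

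For $g$, the argument is the same but uses only the right-hand inclusions of each chain.
\begin{itemize}
\item From $x\in V$ we get $g(x)\in g(V)\subseteq W$.
\item From $x\in W$ we get $g(x)\in g(W)\subseteq V$.
\end{itemize}
Hence $g(x)\in V\cap W$.

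Since $x\in V\cap W$ was arbitrary, this gives $f(V\cap W)\subseteq V\cap W$ and $g(V\cap W)\subseteq V\cap W$. The pointwise reformulation in the statement is just the definition of the image of a set. No step poses a real obstacle. The only point needing care is to apply the $V$-chain and the $W$-chain to the same point $x$, using $x\in V$ and $x\in W$ in turn. The intermediate inclusions $f(V)\subseteq g(V)$ and $f(W)\subseteq g(W)$ are not actually needed for this proposition beyond transitivity, because the cyclic property of $g$ alone already forces each image into the opposite set.
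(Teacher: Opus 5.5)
Your proof is correct and matches the paper's element-chasing argument: for $x\in V\cap W$, the inclusions $f(V)\subseteq W$ and $f(W)\subseteq V$ (by transitivity through $g(V)$ and $g(W)$), together with $g(V)\subseteq W$ and $g(W)\subseteq V$, place $f(x)$ and $g(x)$ in both $V$ and $W$. Your note on the vacuous case $V\cap W=\varnothing$ is a harmless addition that the paper omits.
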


\begin{proof}
Let $x\in V\cap W$. Then $x\in V$ and $x\in W$. Since $f(V)\subseteq W$ and $f(W)\subseteq V$, 
we have $f(x)\in W$ and $f(x)\in V$. Hence, $f(x)\in V\cap W$. Therefore, $f(V\cap W)\subseteq V\cap W$. 

Similarly, since $g(V)\subseteq W$ and $g(W)\subseteq V$, we obtain $g(x)\in W$ and $g(x)\in V$. Thus, $g(x)\in V\cap W$. Therefore, $g(V\cap W)\subseteq V\cap W$
\end{proof}

\begin{theorem}\label{thm:co-cyclic-common-fixed-point}
Let $V$ and $W$ be two non-empty closed and convex subsets of a complete
metric space $(M,\mu)$ such that $V\cap W\neq\varnothing$. Let $f,g:V\cup W\to V\cup W$ be two continuous co-cyclic mappings satisfying
\[
f(V)\subseteq g(V)\subseteq W
\quad\text{and}\quad
f(W)\subseteq g(W)\subseteq V.
\]
Suppose that there exists $q\in(0,1)$ such that
\[
\mu(f(x),f(y))
\leq
q\,\mu(g(x),g(y)),
\qquad \forall x\in V,\ \forall y\in W.
\]
If $f$ and $g$ are compatible, then $f$ and $g$ have a unique common
fixed point in $V\cap W$.
\end{theorem}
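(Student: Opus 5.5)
We follow Jungck's iteration adapted to the co-cyclic structure. Fix $x_0\in V$. Since $f(V)\subseteq g(V)$, there is $x_1\in V$ with $g(x_1)=f(x_0)$. Next, $f(x_1)\in f(V)\subseteq g(V)$, so we pick $x_2\in V$ with $g(x_2)=f(x_1)$, and so on. This gives a sequence $\{x_n\}\subseteq V$ with $y_n:=f(x_n)=g(x_{n+1})\in W$.

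The contractive hypothesis only compares points $x\in V$ with points $y\in W$. We therefore want the sequence to alternate, and the cleaner device is Proposition~\ref{pro:intersection-invariance}. Start with $x_0\in V\cap W$, which is possible since $V\cap W\neq\varnothing$. We then want each $x_{n+1}$ chosen in $V\cap W$; for this we use the inclusions $f(V\cap W)\subseteq g(V)\cap g(W)$. The delicate point is that a $g$-preimage of $f(x_n)$ lying in $V\cap W$ is not guaranteed by the hypotheses. If this fails, we fall back on alternating the choice: take $x_{2k}\in V$ and $x_{2k+1}\in W$, using $f(V)\subseteq g(V)$ and $f(W)\subseteq g(W)$ respectively. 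Then consecutive indices always form a $V$–$W$ pair.

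With consecutive terms in $V$ and $W$ (or both in $V\cap W$), we obtain
\[
\mu(y_{n},y_{n+1})=\mu(f(x_n),f(x_{n+1}))\le q\,\mu(g(x_n),g(x_{n+1}))=q\,\mu(y_{n-1},y_n),
\]
hence $\mu(y_n,y_{n+1})\le q^n\mu(y_0,y_1)$. The standard geometric-series estimate then shows $\{y_n\}$ is Cauchy. By completeness $y_n\to z$. The subsequences $y_{2k}$ and $y_{2k+1}$ lie alternately in $W$ and $V$ (images of $V$ and $W$ under $f$), and these sets are closed, so $z\in V\cap W$. Both $f(x_n)=y_n$ and $g(x_n)=y_{n-1}$ tend to $z$; these are equivalent Cauchy sequences, which is exactly the setting of Lemma~\ref{lem:equivalent-cauchy-sequences}.

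For the existence of a common fixed point, compatibility gives $\mu(fg(x_n),gf(x_n))\to0$. Continuity gives $fg(x_n)\to f(z)$ and $gf(x_n)\to g(z)$, so $f(z)=g(z)$. To show $f(z)=z$, we apply the contraction to the pair $(x_n,z)$ with $x_n\in V$ and $z\in W$ (legitimate since $z\in V\cap W$):
\[
\mu(f(x_n),f(z))\le q\,\mu(g(x_n),g(z)).
\]
Letting $n\to\infty$ yields $\mu(z,f(z))\le q\,\mu(z,g(z))=q\,\mu(z,f(z))$, hence $f(z)=z=g(z)$.

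For uniqueness, suppose $w$ is another common fixed point in $V\cap W$. Applying the contraction to $z\in V$ and $w\in W$ gives
\[
\mu(z,w)=\mu(f(z),f(w))\le q\,\mu(g(z),g(w))=q\,\mu(z,w),
\]
so $z=w$.

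The main obstacle is the construction step: guaranteeing that consecutive iterates $x_n,x_{n+1}$ fall one in $V$ and one in $W$, since the hypotheses supply preimages only in $V$ or $W$ separately. Convexity is not needed in any of the above steps.
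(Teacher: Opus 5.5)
The construction step fails as written. Under the co-cyclic inclusions, a $g$-preimage of $f(x)$ is only guaranteed in the \emph{same} set as $x$. From $x_{2k}\in V$ you get $f(x_{2k})\in f(V)\subseteq g(V)$, which only guarantees a preimage $x_{2k+1}\in V$, not one in $W$. Likewise $f(W)\subseteq g(W)$ keeps you in $W$. So every Jungck sequence $f(x_n)=g(x_{n+1})$ stays in the set it starts in. Forcing $x_{2k+1}\in W$ would need $f(V)\subseteq g(W)$ and $f(W)\subseteq g(V)$, i.e.\ the sub-cyclic hypothesis of Theorem~\ref{thm:sub-cyclic-common-fixed-point}, which is not assumed here. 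Consequently the recursion $\mu(y_n,y_{n+1})\le q\,\mu(y_{n-1},y_n)$ is unjustified, because consecutive $x_n,x_{n+1}$ lie in the same set, where the contraction gives nothing. So is $z\in V\cap W$: all $y_n$ lie in $f(V)\subseteq W$, so closedness yields only $z\in W$.

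Your doubt about the other branch is also well founded. The paper's proof works inside $U=V\cap W$ and asserts $f(U)\subseteq g(U)$ ``by the co-cyclic property''. But co-cyclicity gives only $f(U)\subseteq g(V)\cap g(W)$, and the inclusion can fail under all hypotheses of the theorem. Take $V=[-1,1]$, $W=[0,2]$, $g(x)=\tfrac34+\tfrac x2$ on $[-1,0]$, $g(x)=\tfrac12+\tfrac12|x-\tfrac12|$ on $[0,1]$, $g(x)=\tfrac54-\tfrac x2$ on $[1,2]$, and $f=\phi\circ g$ with $\phi(s)=\tfrac12-q(s-\tfrac12)$. Then $f,g$ are continuous and co-cyclic ($g(V)=g(W)=[\tfrac14,\tfrac34]$), and $\mu(f(x),f(y))=q\,\mu(g(x),g(y))$. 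They are compatible: if $f(x_n),g(x_n)\to t$ then $\phi(t)=t$, so $t=\tfrac12=g(\tfrac12)=f(\tfrac12)$. Yet $f(0)=\tfrac12-\tfrac q4\notin g(U)=[\tfrac12,\tfrac34]$.

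The missing idea is to compare $V$ with $W$ through two parallel sequences instead of one alternating one. Take $x_0\in V$, $w_0\in W$ and choose $x_{n+1}\in V$, $w_{n+1}\in W$ with $g(x_{n+1})=f(x_n)$ and $g(w_{n+1})=f(w_n)$, which co-cyclicity does permit. The contraction on the cross pairs $(x_n,w_n)$ and $(x_n,w_{n-1})$ gives
\[
\mu\bigl(g(x_n),g(w_n)\bigr)\le q^n\mu\bigl(g(x_0),g(w_0)\bigr),\qquad
\mu\bigl(g(x_{n+1}),g(w_n)\bigr)\le q^n\mu\bigl(g(x_1),g(w_0)\bigr).
\]
Going through $g(w_n)$ with the triangle inequality, $\mu(g(x_n),g(x_{n+1}))\le Cq^n$ with $C=\mu(g(x_0),g(w_0))+\mu(g(x_1),g(w_0))$. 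Hence $\{g(x_n)\}$ is Cauchy, and $\{g(w_n)\}$ is equivalent to it and has the same limit $z$. Since $g(x_n)\in g(V)\subseteq W$ and $g(w_n)\in g(W)\subseteq V$, closedness gives $z\in V\cap W$. As $f(x_n)=g(x_{n+1})\to z$ and $g(x_n)\to z$, the rest of your argument goes through unchanged: the compatibility/continuity step, the contraction applied to $x_n\in V$ and $z\in W$, and uniqueness. You are right that convexity is never needed.
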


\begin{proof}
Let $U=V\cap W$. Since $V\cap W\neq\varnothing$ and $V$ and $W$ are closed, $U$ is a
non-empty closed subset of $M$. Hence $U$ is a complete metric space.

We first observe that $f$ and $g$ are self-mappings on $U$. Indeed, let
$x\in U$. Then $x\in V$ and $x\in W$. Since $f(V)\subseteq W$ and $f(W)\subseteq V$, we obtain $f(x)\in W$ and $f(x)\in V$. Thus, $f(x)\in U$. 
Similarly, since $g(V)\subseteq W$ and $g(W)\subseteq V$, we have $g(x)\in U$. Therefore, $f(U)\subseteq U$ and $g(U)\subseteq U$. 

Moreover, by the co-cyclic property, $f(U)\subseteq g(U)$. Choose an arbitrary point $t_0\in U$. Since $f(t_0)\in f(U)\subseteq
g(U)$, there exists $t_1\in U$ such that $f(t_0)=g(t_1)$. Inductively, we can construct a sequence $\{t_n\}$ in $U$ satisfying
\[
f(t_n)=g(t_{n+1}),\qquad n\geq0.
\]

Define $p_n=g(t_n)$, $n\geq0$. Then
\[
p_{n+1}=g(t_{n+1})=f(t_n).
\]
Since $t_n,t_{n+1}\in U\subseteq V\cap W$, the contractive condition
yields
\[
\begin{aligned}
\mu(p_{n+1},p_{n+2})
&=\mu(g(t_{n+1}),g(t_{n+2}))\\
&=\mu(f(t_n),f(t_{n+1}))\\
&\leq q\,\mu(g(t_n),g(t_{n+1}))\\
&=q\,\mu(p_n,p_{n+1}).
\end{aligned}
\]
Therefore, by induction,
\[
\mu(p_n,p_{n+1})
\leq
q^n\mu(p_0,p_1),
\qquad n\geq0.
\]

We now show that $\{p_n\}$ is a Cauchy sequence. Let $m>n$. By the
triangle inequality,
\[
\begin{aligned}
\mu(p_n,p_m)
&\leq
\sum_{j=n}^{m-1}\mu(p_j,p_{j+1})\\
&\leq
\sum_{j=n}^{m-1}q^j\mu(p_0,p_1)\\
&\leq
\frac{q^n}{1-q}\mu(p_0,p_1).
\end{aligned}
\]
Since $q\in(0,1)$, the right-hand side tends to zero as $n\to\infty$.
Hence $\{p_n\}$ is a Cauchy sequence in $U$.

Since $U$ is complete, there exists $z\in U$ such that $p_n=g(t_n)\longrightarrow z$. Also,
\[
f(t_n)=g(t_{n+1})=p_{n+1}\longrightarrow z.
\]
Thus,
\[
\lim_{n\to\infty}f(t_n)
=
\lim_{n\to\infty}g(t_n)
=
z.
\]

Since $f$ and $g$ are compatible,
\[
\lim_{n\to\infty}
\mu\bigl(fg(t_n),gf(t_n)\bigr)=0.
\]
By the continuity of $f$ and $g$, we have
\[
fg(t_n)=f(g(t_n))\longrightarrow f(z)
\]
and
\[
gf(t_n)=g(f(t_n))\longrightarrow g(z).
\]
Therefore, $\mu(f(z),g(z))=0$, 
and hence $f(z)=g(z)$. 

We now prove that $z$ is a common fixed point of $f$ and $g$. Since
$z,t_n\in U\subseteq V\cap W$, the contractive condition gives
\[
\mu(f(z),f(t_n))
\leq
q\,\mu(g(z),g(t_n)).
\]
Taking the limit as $n\to\infty$, we obtain
\[
\mu(f(z),z)
\leq
q\,\mu(g(z),z).
\]
Since $f(z)=g(z)$, it follows that
\[
\mu(g(z),z)
\leq
q\,\mu(g(z),z).
\]
Hence $(1-q)\mu(g(z),z)\leq0$. Since $q\in(0,1)$, we obtain $\mu(g(z),z)=0$, 
and therefore $g(z)=z$. 
Since $f(z)=g(z)$, we also have $f(z)=z$. Thus $z$ is a common fixed point of $f$ and $g$ in $V\cap W$.

Finally, suppose that $u,v\in V\cap W$ are two common fixed points of
$f$ and $g$. Then
\[
f(u)=g(u)=u
\quad\text{and}\quad
f(v)=g(v)=v.
\]
Using the contractive condition, we obtain
\[
\mu(u,v)
=
\mu(f(u),f(v))
\leq
q\,\mu(g(u),g(v))
=
q\,\mu(u,v).
\]
Thus, $(1-q)\mu(u,v)\leq0$. 
Since $1-q>0$, we have $\mu(u,v)=0$. 
and hence $u=v$. Therefore, the common fixed point is unique.
\end{proof}

Next, suppose that $f$ is a cyclic mapping and $g$ is not necessarily
cyclic, but $f(V\cup W)\subseteq g(V\cup W)$. Under these assumptions, we establish a common fixed point theorem.

\begin{theorem}\label{thm:sub-cyclic-common-fixed-point}
Let $V$ and $W$ be two non-empty closed subsets of a complete metric
space $(M,\mu)$. Let $f,g:V\cup W\to V\cup W$ be two continuous self-mappings such that:

\begin{enumerate}
    \item[(i)] $f$ is sub-cyclic in $g$, that is, $ f(V)\subseteq g(W)\subseteq W$ and $ f(W)\subseteq g(V)\subseteq V$;

    \item[(ii)] there exists $q\in(0,1)$ such that
    \[
    \mu(f(x),f(y))
    \leq
    q\,\mu(g(x),g(y)),
    \qquad \forall x\in V,\ \forall y\in W.
    \]
\end{enumerate}
If $f$ and $g$ are compatible, then they have a unique common fixed
point in $V\cap W$.
\end{theorem}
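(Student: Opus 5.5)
The plan is to mimic the proof of Theorem~\ref{thm:co-cyclic-common-fixed-point}, with one change. That proof worked inside $U=V\cap W$, but here $V\cap W\neq\varnothing$ is not assumed, so we cannot start there. Instead I would build a Jungck-type sequence that alternates between $V$ and $W$, and only at the end use closedness of $V$ and $W$ to place the limit in $V\cap W$.

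\emph{Construction.} Choose $t_0\in V$. Since $f(t_0)\in f(V)\subseteq g(W)$, pick $t_1\in W$ with $g(t_1)=f(t_0)$. Since $f(t_1)\in f(W)\subseteq g(V)$, pick $t_2\in V$ with $g(t_2)=f(t_1)$. Inductively this gives $t_{2n}\in V$, $t_{2n+1}\in W$ and $f(t_n)=g(t_{n+1})$ for all $n\ge0$. Put $p_n=g(t_n)$.

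\emph{Contraction and convergence.} Consecutive points $t_n,t_{n+1}$ always lie one in $V$ and one in $W$. By symmetry of $\mu$, condition (ii) applies to every such pair, so
\[
\mu(p_{n+1},p_{n+2})=\mu(f(t_n),f(t_{n+1}))\le q\,\mu(p_n,p_{n+1}).
\]
The same geometric-series estimate as before then shows that $\{p_n\}$ is Cauchy, and completeness of $M$ gives a limit $z\in M$. Because $g(V)\subseteq V$ and $g(W)\subseteq W$, we have $p_{2n}=g(t_{2n})\in V$ and $p_{2n+1}=g(t_{2n+1})\in W$. Since $V$ and $W$ are closed, the two subsequences give $z\in V$ and $z\in W$, so $z\in V\cap W$. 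As a by-product this shows $V\cap W\neq\varnothing$. Also $f(t_n)=p_{n+1}\to z$ and $g(t_n)\to z$.

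\emph{Coincidence and fixed point.} Compatibility gives $\mu(fg(t_n),gf(t_n))\to0$. Continuity of $f$ and $g$ gives $fg(t_n)\to f(z)$ and $gf(t_n)\to g(z)$, hence $f(z)=g(z)$. To get $g(z)=z$, apply (ii) to $x=t_{2n}\in V$ and $y=z\in W$:
\[
\mu(f(t_{2n}),f(z))\le q\,\mu(g(t_{2n}),g(z)).
\]
Letting $n\to\infty$ gives $\mu(z,f(z))\le q\,\mu(z,g(z))=q\,\mu(z,f(z))$, so $f(z)=g(z)=z$.

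\emph{Uniqueness.} If $u,v$ are common fixed points in $V\cap W$, regard $u\in V$ and $v\in W$. Then (ii) gives $\mu(u,v)\le q\,\mu(u,v)$, so $u=v$.

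The main obstacle is the step placing $z$ in $V\cap W$, since there is no intersection to work inside. It is handled by the parity splitting of $\{p_n\}$: this uses the inclusions $g(W)\subseteq W$ and $g(V)\subseteq V$ from the sub-cyclic hypothesis together with closedness of $V$ and $W$.
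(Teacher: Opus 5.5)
Your proof is correct and follows essentially the same route as the paper. Both build the alternating Jungck sequence $t_{2n}\in V$, $t_{2n+1}\in W$ with $f(t_n)=g(t_{n+1})$, prove it Cauchy by the geometric estimate, use closedness of $V$ and $W$ on the even and odd terms to get $z\in V\cap W$, then get $f(z)=g(z)$ from compatibility plus continuity, and finish $g(z)=z$ and uniqueness with the contraction. The differences are cosmetic: you place $p_{2n}\in V$ and $p_{2n+1}\in W$ via $g(V)\subseteq V$ and $g(W)\subseteq W$, where the paper uses $f(W)\subseteq V$ and $f(V)\subseteq W$, and you pair $z$ only with the even terms $t_{2n}\in V$ instead of invoking symmetry of $\mu$ for all $n$.
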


\begin{proof}
Choose an arbitrary point $x_0\in V$. Since $f(V)\subseteq g(W)$, 
there exists $x_1\in W$ such that $f(x_0)=g(x_1)$. Again, since $f(W)\subseteq g(V)$, 
there exists $x_2\in V$ such that $f(x_1)=g(x_2)$. 

Continuing inductively, we obtain a sequence $\{x_n\}$ in $V\cup W$
such that
\[
x_{2n}\in V,\qquad x_{2n+1}\in W,
\]
and
\[
f(x_n)=g(x_{n+1}),
\qquad n\geq0.
\]

Since $x_n$ and $x_{n+1}$ belong alternately to $V$ and $W$, the
contractive condition gives
\[
\begin{aligned}
\mu(g(x_{n+1}),g(x_{n+2}))
&=
\mu(f(x_n),f(x_{n+1}))\\
&\leq
q\,\mu(g(x_n),g(x_{n+1})).
\end{aligned}
\]
Therefore, by induction,
\[
\mu(g(x_n),g(x_{n+1}))
\leq
q^n\mu(g(x_0),g(x_1)),
\qquad n\geq0.
\]

We now show that $\{g(x_n)\}$ is a Cauchy sequence. Let $m>n$. By the
triangle inequality,
\[
\begin{aligned}
\mu(g(x_n),g(x_m))
&\leq
\sum_{k=n}^{m-1}
\mu(g(x_k),g(x_{k+1}))\\
&\leq
\sum_{k=n}^{m-1}
q^k\mu(g(x_0),g(x_1))\\
&\leq
\frac{q^n}{1-q}\mu(g(x_0),g(x_1)).
\end{aligned}
\]
Since $q\in(0,1)$, the right-hand side tends to zero as $n\to\infty$.
Thus $\{g(x_n)\}$ is a Cauchy sequence.

Since $(M,\mu)$ is complete, there exists $z\in M$ such that $g(x_n)\longrightarrow z$. Moreover,
\[
f(x_n)=g(x_{n+1})\longrightarrow z.
\]
Hence
\[
f(x_n)\longrightarrow z
\quad\text{and}\quad
g(x_n)\longrightarrow z.
\]

Now observe that
\[
g(x_{2n})=f(x_{2n-1})\in f(W)\subseteq V
\]
and
\[
g(x_{2n+1})=f(x_{2n})\in f(V)\subseteq W.
\]
Thus the even subsequence $\{g(x_{2n})\}$ lies in $V$, while the odd
subsequence $\{g(x_{2n+1})\}$ lies in $W$. Since both subsequences
converge to $z$ and $V$ and $W$ are closed, we obtain $z\in V\cap W$. 

Since $f$ and $g$ are compatible and
\[
f(x_n)\longrightarrow z
\quad\text{and}\quad
g(x_n)\longrightarrow z,
\]
we have
\[
\mu(fg(x_n),gf(x_n))\longrightarrow0.
\]
By the continuity of $f$ and $g$,
\[
fg(x_n)=f(g(x_n))\longrightarrow f(z)
\]
and
\[
gf(x_n)=g(f(x_n))\longrightarrow g(z).
\]
Therefore, $\mu(f(z),g(z))=0$, 
and hence $f(z)=g(z)$. 

We now show that $z$ is a common fixed point. Since $z\in V\cap W$, we
may apply the contractive condition to $z$ and $x_n$ for each $n$ by
choosing the appropriate order of the two points. Thus,
\[
\mu(f(z),f(x_n))
\leq
q\,\mu(g(z),g(x_n)).
\]
Taking the limit as $n\to\infty$, we obtain
\[
\mu(f(z),z)
\leq
q\,\mu(g(z),z).
\]
Since $f(z)=g(z)$, it follows that
\[
\mu(g(z),z)
\leq
q\,\mu(g(z),z).
\]
Therefore, $(1-q)\mu(g(z),z)\leq0$. 
Since $q\in(0,1)$, we obtain $g(z)=z$. 
Consequently, $f(z)=g(z)=z$. Hence $z$ is a common fixed point of $f$ and $g$.

Finally, suppose that $u,v\in V\cap W$ are two common fixed points of
$f$ and $g$. Then
\[
f(u)=g(u)=u
\quad\text{and}\quad
f(v)=g(v)=v.
\]
Applying the contractive condition, we obtain
\[
\mu(u,v)
=
\mu(f(u),f(v))
\leq
q\,\mu(g(u),g(v))
=
q\,\mu(u,v).
\]
Thus, $(1-q)\mu(u,v)\leq0$. 
Since $1-q>0$, we obtain $\mu(u,v)=0$, 
and hence $u=v$. Therefore, the common fixed point is unique.
\end{proof}

\begin{example}\label{exmp:sub-cyclic}
Let $M=\mathbb{R}$ be endowed with the usual metric $\mu(x,y)=|x-y|$. Let $V=[-1,0]$ and $W=[0,1]$. Then $V$ and $W$ are non-empty closed and convex subsets of the complete
metric space $(M,\mu)$. 
Define two mappings $f,g:V\cup W\to V\cup W$ by
\[
f(x)=-\frac{x}{4}
\qquad\text{and}\qquad
g(x)=-\frac{x}{2}.
\]
For $x\in V$, we have
\[
f(x)\in\left[0,\frac14\right]
\quad\text{and}\quad
g(x)\in\left[0,\frac12\right],
\]
and hence $f(V)\subseteq g(V)\subseteq W$. 

Similarly, for $x\in W$, we have
\[
f(x)\in\left[-\frac14,0\right]
\quad\text{and}\quad
g(x)\in\left[-\frac12,0\right],
\]
and hence $f(W)\subseteq g(W)\subseteq V$. Therefore, $f$ and $g$ are co-cyclic mappings.

Moreover, for every $x,y\in V\cup W$,
\[
\begin{aligned}
\mu(f(x),f(y))
&=
\left|-\frac{x}{4}+\frac{y}{4}\right|\\
&=
\frac14|x-y|\\
&=
\frac12\left(\frac12|x-y|\right)\\
&=
\frac12\mu(g(x),g(y)).
\end{aligned}
\]

Thus, the contractive condition holds with $q=\frac12\in(0,1)$. Also, $f(g(x))=\frac{x}{8}$ and $g(f(x))=\frac{x}{8}$, for every $x\in V\cup W$. Hence $f$ and $g$ commute and, consequently,
they are compatible.

Since $V\cap W=\{0\}$ and $f(0)=g(0)=0$, it follows that $0$ is the unique common fixed point of $f$ and $g$ in
$V\cap W$.
\end{example}

\begin{example}\label{exmp:sub-cyclic-example}
Let $M=\mathbb{R}$ be endowed with the usual metric $\mu(x,y)=|x-y|$. Then $(M,\mu)$ is a complete metric space. Let $V=[-1,0]$ and $W=[0,1]$. Define two mappings $f,g:V\cup W\to V\cup W$ by $f(x)=-\frac{x}{4}$ and $g(x)=\frac{x}{2}$.

For $x\in V$, we have
\[
f(V)=\left[0,\frac14\right]
\quad\text{and}\quad
g(W)=\left[0,\frac12\right].
\]
Therefore, $f(V)\subseteq g(W)\subseteq W$. Similarly,
\[
f(W)=\left[-\frac14,0\right]
\quad\text{and}\quad
g(V)=\left[-\frac12,0\right],
\]
and hence $f(W)\subseteq g(V)\subseteq V$. Thus, $f$ is sub-cyclic in $g$.

Moreover, for every $x\in V$ and $y\in W$,
\[
\begin{aligned}
\mu(f(x),f(y))
&=
\left|-\frac{x}{4}+\frac{y}{4}\right| 
\frac14|x-y|,
\end{aligned}
\]
whereas
\[
\begin{aligned}
\mu(g(x),g(y))
&=
\left|\frac{x}{2}-\frac{y}{2}\right| =
\frac12|x-y|.
\end{aligned}
\]
Consequently,
\[
\mu(f(x),f(y))
=
\frac12\mu(g(x),g(y)).
\]
Hence the contractive condition is satisfied with $q=\frac12\in(0,1)$. 

Furthermore, $f(g(x)) = -\frac{x}{8} $ and $g(f(x)) = -\frac{x}{8}$, 
for every $x\in V\cup W$. Thus, $f$ and $g$ commute and hence are
compatible.

Finally, $V\cap W=\{0\}$ and $f(0)=g(0)=0$. Therefore, by Theorem~\textup{(3.2)}, $f$ and $g$ have the unique common
fixed point $0$ in $V\cap W$.
\end{example}

\begin{remark}\label{rk:generalization}\hfill
\begin{enumerate}
    \item If $g$ is the identity mapping in
    Theorem~\ref{thm:sub-cyclic-common-fixed-point}, then the
    sub-cyclic condition reduces to the usual cyclic condition $f(V)\subseteq W$ and $ f(W)\subseteq V$.    Moreover, the contractive condition reduces to
    \[
    \mu(f(x),f(y))\leq q\,\mu(x,y),
    \qquad x\in V,\quad y\in W.
    \]
    Hence, Theorem~\ref{thm:sub-cyclic-common-fixed-point} reduces to
    Theorem~\ref{thm:kirk-cyclic}.

    \item Consequently,
    Theorem~\ref{thm:sub-cyclic-common-fixed-point} extends the cyclic
    fixed point theorem of Kirk et al.~\cite{W.12}.
\end{enumerate}
\end{remark}

\section{Conclusion}\label{sec:Conclusion}

In this paper, we introduced two new classes of mappings, namely,
co-cyclic mappings and sub-cyclic mappings, on the union of two subsets of a
metric space. We established unique common fixed point results for these
mappings under suitable Jungck-type contractive conditions and compatibility
assumptions. The proofs are based on the construction of appropriate
sequences and the completeness of the underlying metric space.

The proposed concepts extend the study of common fixed points for cyclic-type
mappings, and the examples illustrate the applicability of the main results.
As a future scope, the proposed co-cyclic and sub-cyclic mappings may be
studied under other contractive conditions and extended to more general
metric-type spaces. Further research may also focus on best proximity point
results and common fixed point theorems for generalized versions of these
mappings.

\section{Open Questions}
\label{sec:Open Questions}

The introduction of co-cyclic and sub-cyclic mappings gives rise to several
natural questions for further investigation.

\begin{question}
Can the common fixed point results obtained in this paper be extended
to co-cyclic and sub-cyclic mappings satisfying other types of
contractive conditions, such as Kannan-type, Chatterjea-type, or
generalized contractive conditions?
\end{question}

\begin{question}
Can common fixed point results for co-cyclic and sub-cyclic mappings
be established without assuming the continuity or compatibility of
the mappings? If so, what alternative conditions can replace these
assumptions?
\end{question}

\begin{question}
Can the notions of co-cyclic and sub-cyclic mappings be extended to a
finite collection of subsets of a metric space, analogous to cyclic
representations involving more than two subsets?
\end{question}

\begin{question}
Can best proximity point results be established for co-cyclic and
sub-cyclic mappings when the underlying subsets have an empty
intersection?
\end{question}

\begin{question}
Do the common fixed point results for co-cyclic and sub-cyclic
mappings remain valid in more general spaces, such as $b$-metric
spaces, partial metric spaces, or other metric-type spaces?
\end{question}

\end{document}